\theoremstyle{plain}
\theoremstyle{definition}
\newtheorem{definition}{Definition}[section]
\newtheorem{remark}{Remark}[section]
\newtheorem{theorem}{Theorem}[section]
\newtheorem{proposition}{Proposition}[section]
\numberwithin{equation}{section}
\begin{document}
\openup 0.8\jot
\title{\Large\bf $C^*$-basic construction on field algebras of $G$-spin models \thanks{Tis work is supported by the National Natural Science
Foundation of China (Grants Nos. 11701423, 61701343 and
61771294)} }

\author{Xin Qiaoling$^1$, Jiang Lining$^2$, Cao Tianqing$^3$}
\date{}
\maketitle\begin{center}
\begin{minipage}{16cm}
{\small \it
$1$
School of Mathematical Sciences, Tianjin Normal University, Tianjin 300387, China

$2$
School of Mathematics and Statistics, Beijing Institute
of Technology, Beijing 100081, China

$3$
School of Mathematical Sciences, Tiangong University, Tianjin 300387, China
}
\end{minipage}
\end{center}
\vspace{0.05cm}
\begin{center}
\begin{minipage}{16cm}
{\small {\bf Abstract}: Let $G$ be a finite group. Starting from the field algebra ${\mathcal{F}}$ of $G$-spin models, one can construct the crossed product $C^*$-algebra ${\mathcal{F}}\rtimes D(G)$
such that it coincides with the $C^*$-basic construction for the field algebra ${\mathcal{F}}$ and the $D(G)$-invariant subalgebra of ${\mathcal{F}}$, where $D(G)$ is the quantum double of $G$. Under the natural $\widehat{D(G)}$-module action on ${\mathcal{F}}\rtimes D(G)$,
the iterated crossed product $C^*$-algebra
can be obtained, which is $C^*$-isomorphic to the $C^*$-basic construction for ${\mathcal{F}}\rtimes D(G)$ and the field algebra ${\mathcal{F}}$. Furthermore, one can show that the iterated crossed product $C^*$-algebra is a new field algebra
and give the concrete structure with the order and disorder operators.
}
\endabstract
\end{minipage}\vspace{0.10cm}
\begin{minipage}{16cm}
{\bf  Keywords}: $G$-spin models, $C^*$-basic construction, field algebras, dual action\\
Mathematics Subject Classification (2010): 46L05, 16S35
\end{minipage}
\end{center}
\begin{center} \vspace{0.01cm}
\end{center}







\section{Introduction}
In \cite{Jones}, V.F.R.Jones introduced the notion of indices for inclusions of II$_1$ factors,
which opened a completely new aspect of operator algebras involving various other
fields of mathematics and physics, including topology, quantum physics, dynamical systems, noncommutative
geometry, etc.
He established a method to
study the inner structures or the outer structures of given II$_1$-factors, preserving
certain quantity, called the Jones index. And he enlarged the pure algebraic Galois
theory to operator algebra, and this has a significant impact on mathematics and applied
mathematics. The interesting fact is that not only his theory but also the techniques
he used in his theory are important mathematically and applicable to other fields \cite{V.F.R.J}.
Subsequently, there are lots of attempts to extend the original Jones theory
by various mathematicians. For instance, M.Pimsner and S.Popa in \cite{M.PP} introduced the notion of the probabilistic index for
a conditional expectation, which is the best constant of the so-called Pimsner-Popa
inequality. In \cite{H.Ko}, H.Kosaki discussed another way to define index for a normal semifinite faithful conditional expectation of an arbitrary factor
onto a subfactor exploiting spatial theory of Connes and the theory of operatorvalued weights.

Though the probabilistic index works perfectly for analytic purposes even in the case of $C^*$-algebras (see, e.g. \cite{Popa}), it is not always suitable for algebraic operations
such as the basic construction in the $C^*$-case. Inspired by the Pimsner-Popa basis in the sense of \cite{M.PP}, Kosaki's index formula in the sense of \cite{H.Ko}, and the Casimir elements for
semi-simple Lie algebras, Y.Watatani \cite{Y.Wat} proposed to assume existence of a quasi-basis for a conditional expectation, a generalization of
the Pimsner-Popa basis in the von Neumann algebra case, to analyze inclusions of $C^*$-algebras. With a quasi-basis, Watatani successfully introduced a $C^*$-version of
basic construction, which is closely related to $K$-theory of $C^*$-algebras \cite{M.Izu,Y.Wat}.
Roughly speaking, the (original or extended) Jones index
theory is the study of certain elements measuring the maximal number of disjoint copies of topological $*$-subalgebra in a given topological $*$-algebra.

On the other hand, one-dimensional $G$-spin models, as a testing ground for applications of quantum group symmetries,
have an order-disorder type of
quantum symmetry given by the quantum double $D(G)$ of a finite group $G$,
defined by Drinfel'd in the context of finding solutions to the quantum
Yang-Baxter equation arising from statistical mechanics \cite{V.G.Dr}, which generalizes the
$Z(2)\times Z(2)$ symmetry of the lattice Ising model.
In \cite{K.Szl}, the implementation of the Doplicher-Haag-Roberts theory of superselection sectors \cite{DHR1,DHR2,DHR3,DHR4} to $G$-spin models has been carried out. In this approach the symmetries can be reflected as the $D(G)$-invariant subalgebra ${\mathcal{A}}$ of the field algebra ${\mathcal{F}}$ in $G$-spin models, called the observable algebra.

The paper studies the Jones basic construction on field algebras of $G$-spin models, and is organized as follows.

In Section 2, we collect the necessary definitions and facts about $G$-spin models, such as the quantum double $D(G)$, the field algebra ${\mathcal{F}}$ and the Hopf action of the symmetry algebra $D(G)$ on ${\mathcal{F}}$, and then we also give a brief description of the $C^*$-basic construction for $C^*$-algebras.

In Section 3, under the conditional expectation $E$ from the field algebra ${\mathcal{F}}$ onto the $D(G)$-invariant subalgebra ${\mathcal{A}}$, we can construct the crossed product $C^*$-algebra
${\mathcal{F}}\rtimes D(G)$, and then prove that this algebra is $C^*$-isomorphic to the $C^*$-algebra $\langle {\mathcal{F}},e_{\mathcal{A}}\rangle_{C^*}$
constructed from the $C^*$-basic construction for
the inclusion ${\mathcal{A}}\subseteq{\mathcal{F}}$.

In Section 4, we construct the iterated crossed product $C^*$-algebra ${\mathcal{F}}\rtimes D(G)\rtimes\widehat{D(G)}$, which is canonically isomorphic to $M_{|G|^2}({\mathcal{F}})$ by Takai duality \cite{H.Tak}, and we prove that the $C^*$-algebra ${\mathcal{F}}\rtimes D(G)\rtimes\widehat{D(G)}$ is $C^*$-isomorphic to
$\langle{\mathcal{F}}\rtimes D(G),e_2\rangle_{C^*}$ constructed from the $C^*$-basic construction
for the inclusion ${\mathcal{F}}\subseteq{\mathcal{F}}\rtimes D(G)$. Moreover,
we give the concrete description of the new field algebra ${\mathcal{F}}\rtimes D(G)\rtimes\widehat{D(G)}$ by means of the order and disorder operators.

All the algebras in this paper will be unital associative algebras over the complex field ${\Bbb C}$. The unadorned tensor product $\otimes$ will stand for the usual tensor product over ${\Bbb C}$.
 For general results on Hopf algebras please refer to the books of Abe \cite{E.Abe} and Sweedler \cite{M.E.Sw}. We shall follow their notations, such as $S$, $\bigtriangleup$, $\varepsilon$ for the antipode, the comultiplication and the counit, respectively. Also we shall use the so-called
``Sweedler-type notation" for the image of $\bigtriangleup$. That is
\begin{eqnarray*}
    \begin{array}{c}
    \bigtriangleup(a)=\sum\limits_{(a)}a_{(1)}\otimes a_{(2)}.
     \end{array}
\end{eqnarray*}

\section{$G$-spin models and the $C^*$-basic construction}
We first recall the main features of $G$-spin models, considered in
the $C^*$-algebraic framework for quantum lattice systems, and then give the $C^*$-basic construction for $C^*$-algebras in $G$-spin models.

\subsection{Definitions and preliminary results}
Assume that $G$ is a finite group with a unit $u$. The $G$-valued spin configuration on the two-dimensional square lattices is the map $\sigma\colon{\Bbb Z}^2\rightarrow G$ with Euclidean action functional: $$S(\sigma)=\sum \limits_{(x,y)} f(\sigma_x^{-1}\sigma_y),$$ in which the summation runs over the nearest neighbor pairs in ${\Bbb Z}^2$ and $f\colon G\rightarrow {\Bbb R}$ is a function of the positive type. This kind of classical statistical systems are called $G$-spin models \cite{S.Dop,Qiao}. And such models provide the simplest examples of lattice field theories exhibiting quantum symmetry.
In general, $G$-spin models with an Abelian group $G$ are known to have a symmetry group $G\times \widetilde{G}$, where $\widetilde{G}$ is the group of characters of $G$. If $G$ is non-Abelian, the models have a symmetry of a quantum double
 $D(G)$ \cite{K.A.Da}, which is defined as follows.

\begin{definition}
Let $C(G)$ be the algebra of complex valued functions on
$G$ and consider the adjoint action of $G$ on $C(G)$ according to $\alpha_g\colon f\mapsto f\circ Ad(g^{-1})$.
The quantum double $D(G)$ is defined as the crossed product $D(G) = C(G)\rtimes_{\alpha} G$ of
$C(G)$ by this action. In terms of generators $D(G)$ is the algebra generated by elements
$U_g$ and $V_h$ $(g,h\in G)$, with the relations
\begin{eqnarray*}
    \begin{array}{rcl}
    U_g U_h&=&\delta_{g,h}U_g,\\[5pt]
    V_g V_h&=&V_{gh},\\[5pt]
    V_h U_g&=&U_{hgh^{-1}}V_h,
     \end{array}
\end{eqnarray*}
and the identification $\sum\limits_{g\in G}U_g=V_u=1$, where
$\delta_{g,h}=\left\{\begin{array}{cc}
                          1 ,& {\mathrm{if}} \   g=h, \\[5pt]
                          0 ,& {\mathrm{if}} \  g\neq h.
                        \end{array}\right.$
\end{definition}

It is easy to see that $D(G)$ is of finite dimension, where as a convenient basis one
may choose $U_gV_h, g,h\in G$, multiplying according to
$U_{g_1}V_{h_1}U_{g_2}V_{h_2}=
\delta_{g_1h_1,h_1g_2}U_{g_1}V_{h_1h_2}.$

Here and from now on, by $(g,h)$ we always denote the element $U_gV_h$ for notational convenience.

Also, the structure maps are given by
 \begin{eqnarray*}
    \begin{array}{rcll}
\bigtriangleup(g,h)&=&\sum \limits_{t\in G}(t,h)\otimes (t^{-1}g,h),& (\mathrm{coproduct})\\[5pt]
\varepsilon(g,h)&=&\delta_{g,u},& (\mathrm{counit})\\[5pt]
S(g,h)&=&(h^{-1}g^{-1}h,h^{-1}),& (\mathrm{antipode})
     \end{array}
\end{eqnarray*}
on the linear basis $\{(g,h),g,h\in G\}$ and are extended in $D(G)$ by linearity.
One can prove that $D(G)$ is a Hopf algebra, with a unique element $E=\frac{1}{|G|}\sum\limits_{g\in G}(u,g)$, called an integral element, satisfying for any $a\in D(G)$,
$$aE=Ea=\varepsilon(a)E.$$
Moreover, with the definition
 \begin{eqnarray*}
    \begin{array}{c}
    (g,h)^*=(h^{-1}gh,h^{-1}),
         \end{array}
\end{eqnarray*}
and the appropriate extension, $D(G)$ is a semisimple $*$-algebra of finite dimension \cite{M.E.Sw}, which implies that $D(G)$ becomes a Hopf $C^*$-algebra.

As in the traditional case, one can define the local quantum field algebra as follows.

\begin{definition}
 The local field algebra of a $G$-spin model ${\mathcal{F}}_{\mathrm{loc}}$ is an associative algebra with a unit $I$ generated by
 $\{\delta_g(x), \rho_h(l)\colon g, h\in G, x\in {\Bbb Z}, l\in {\Bbb Z}+\frac{1}{2}\}$ subject to
\begin{eqnarray*}
    \begin{array}{rcl}
\sum\limits_{g\in G}\delta_g(x)&=&I=\rho_u(l),\\[5pt]
\delta_{g_1}(x)\delta_{g_2}(x)&=&\delta_{g_1,g_2}\delta_{g_1}(x),\\[5pt]
 \rho_{h_1}(l)\rho_{h_2}(l)&=&\rho_{h_1h_2}(l),\\[5pt]
\delta_{g_1}(x)\delta_{g_2}(x')&=&\delta_{g_2}(x')\delta_{g_1}(x),\\[5pt]
 \rho_h(l)\delta_g(x)&=& \left\{\begin{array}{cc}
                          \delta_{hg}(x)\rho_h(l), &\mbox{if}\ l<x, \\[5pt]
                          \delta_g(x)\rho_h(l), & \mbox{if}\ l>x,
                        \end{array}\right.\\[5pt]
 \rho_{h_1}(l)\rho_{h_2}(l')&=& \left\{\begin{array}{cc}
                          \rho_{h_2}(l')\rho_{{h_2}^{-1}h_1{h_2}}(l), & \mbox{if}\ l>l', \\[5pt]
                          \rho_{{h_1}{h_2}{h_1}^{-1}}(l')\rho_{h_1}(l), & \mbox{if}\ l<l',
                        \end{array}\right.

    \end{array}
\end{eqnarray*}
for $x, x' \in {\Bbb Z},$ $l, l'\in {\Bbb Z}+\frac{1}{2}$ and $h_1, h_2, g_1,g_2\in G$.
\end{definition}

The $*$-operation is defined on the generators as $\delta_g^\ast(x)=\delta_g(x), \ \rho_h^\ast(l)=\rho_{h^{-1}}(l)$ and can be extended to an involution on ${\mathcal{F}}_{\mathrm{loc}}$. In this way, ${\mathcal{F}}_{\mathrm{loc}}$ becomes a unital $*$-algebra.
Using the $C^*$-inductive limit \cite{B.R.Li,K.Szl}, ${\mathcal{F}}_{\mathrm{loc}}$ can be extended to
a $C^*$-algebra ${\mathcal{F}}$, called the field algebra of $G$-spin models.

There is an action $\gamma$ of $D(G)$ on ${\mathcal{F}}$ in the following. For $x\in {\Bbb Z}$, $l\in {\Bbb Z}+\frac{1}{2}$ and $g,h\in G$, set
\begin{eqnarray*}
    \begin{array}{rcll}
(g,h)\delta_f(x)&=&\delta_{g,u}\delta_{hf}(x), & \forall f\in G,\\
(g,h)\rho_t(l)&=&\delta_{g,hth^{-1}}\rho_g(l), & \forall t\in G.
 \end{array}
\end{eqnarray*}
The map $\gamma$ can be extended for products of generators inductively in the number of generators by the rule
\begin{eqnarray*}
    \begin{array}{c}
    (g,h)(fT)=\sum\limits_{(g,h)}(g,h)_{(1)}(f)(g,h)_{(2)}(T),
 \end{array}
\end{eqnarray*}
where $f$ is one of the generators in ${\mathcal{F}}_{\mathrm{loc}}$ and $T$ is a finite product of generators. Finally, it is linearly extended both in $D(G)$ and ${\mathcal{F}}_{\mathrm{loc}}$.

\begin{proposition}$^{\cite{K.Szl}}$
The field algebra ${\mathcal{F}}$ is a $D(G)$-module algebra with respect to the map $\gamma$. Namely, the map $\gamma$ satisfies the following relations:
\begin{eqnarray*}
    \begin{array}{rcl}
(ab)(T)&=&a(b(T)),\\[5pt]
a(T_1T_2)&=&\sum\limits_{(a)}a_{(1)}(T_1)a_{(2)}(T_2),\\[5pt]
a(T^*)&=&(S(a^*)(T))^*,
 \end{array}
\end{eqnarray*}
for $ a, b\in D(G), \ T_1, T_2, T\in {\mathcal{F}}$.
\end{proposition}

Set $${\mathcal{A}}\ =\ \{F\in {\mathcal{F}}\colon a(F)=\varepsilon(a)(F), \ \forall a\in D(G)\}.$$ We call it an observable algebra in the field algebra ${\mathcal{F}}$ of $G$-spin models. Furthermore, one can show that ${\mathcal{A}}$ is a nonzero $C^*$-subalgebra of ${\mathcal{F}}$, and $${\mathcal{A}}\ =\ \{F\in {\mathcal{F}}\colon E(F)=F\}\equiv E({\mathcal{F}}).$$
Indeed, from the following proposition, one can see that ${\mathcal{A}}$ is a $C^*$-subalgebra of ${\mathcal{F}}$.

\begin{proposition}$^{\cite{K.Szl}}$
 The map $E\colon{\mathcal{F}}\rightarrow{\mathcal{A}}$ satisfies the following conditions:

 $(1)$\ $E(I)=I$ where $I$ is the unit of ${\mathcal{F}}$;

 $(2)$\ (bimodular property) $\forall \ F_1, F_2\in{\mathcal{A}}, \ F\in {\mathcal{F}}$,
 $$E(F_1FF_2)=F_1E(F)F_2;$$

 $(3)$\ $E$ is positive.
\end{proposition}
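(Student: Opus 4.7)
The conditional expectation $E\colon\mathcal{F}\to\mathcal{A}$ is concretely the action on $\mathcal{F}$ of the integral element $E=\frac{1}{|G|}\sum_{g\in G}(u,g)\in D(G)$ under the Hopf action $\gamma$ of Lemma~2.1, and its image lies in $\mathcal{A}$ by the left-integral identity $a\cdot E=\varepsilon(a)E$: indeed, $a(E(F))=(aE)(F)=\varepsilon(a)E(F)$ for every $a\in D(G)$ and $F\in\mathcal{F}$. All three items then reduce to identities inside the Hopf algebra $D(G)$.

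For (1), the module-algebra axiom yields $a(I)=\varepsilon(a)I$, so that $E(I)=\frac{1}{|G|}\sum_{g}\varepsilon(u,g)I=I$ since $\varepsilon(u,g)=\delta_{u,u}=1$. For (2), the iterated module-algebra rule together with coassociativity gives $E(F_1FF_2)=\sum_{(E)}E_{(1)}(F_1)E_{(2)}(F)E_{(3)}(F_2)$; invoking the $\mathcal{A}$-invariance $a(F_i)=\varepsilon(a)F_i$ on the outer factors replaces $E_{(1)}(F_1)$ by $\varepsilon(E_{(1)})F_1$ and $E_{(3)}(F_2)$ by $\varepsilon(E_{(3)})F_2$, after which the counit axiom $(\varepsilon\otimes\operatorname{id}\otimes\varepsilon)\circ(\Delta\otimes\operatorname{id})\circ\Delta=\operatorname{id}$ collapses what remains to $E(F)$, producing $F_1E(F)F_2$.

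The real content lies in (3). The strategy is to compute $\Delta$ and $S$ explicitly on the terms of the integral, so as to identify the second leg of the coproduct with the antipode of the adjoint of the first leg. From the definitions one has $\Delta(u,g)=\sum_{t\in G}(t,g)\otimes(t^{-1},g)$; a direct calculation using $(t,g)^{\ast}=(g^{-1}tg,g^{-1})$ and the antipode formula yields $S((t,g)^{\ast})=S(g^{-1}tg,g^{-1})=(t^{-1},g)$. Combining this with the $\ast$-module axiom $a(F^{\ast})=(S(a^{\ast})(F))^{\ast}$ one obtains
\begin{equation*}
E(F^{\ast}F)=\sum_{(E)}E_{(1)}(F^{\ast})E_{(2)}(F)=\frac{1}{|G|}\sum_{g,t\in G}\bigl((t^{-1},g)(F)\bigr)^{\ast}(t^{-1},g)(F)\geq 0,
\end{equation*}
a manifest sum of positive elements. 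The main obstacle is precisely this last step: the coincidence $S((t,g)^{\ast})=(t^{-1},g)$ with the second leg of $\Delta(u,g)$ is what turns the averaged expression into a sum of the form $X^{\ast}X$, and without this compatibility between the $\ast$-structure, antipode and coproduct of the integral, positivity of $E$ would not be automatic.
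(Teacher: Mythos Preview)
Your proof is correct. Note, however, that the paper does not supply its own proof of this proposition: it is stated with a citation to \cite{K.Szl} and no proof environment follows, so there is nothing to compare against directly.

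That said, your argument is exactly the one the authors themselves employ in the parallel situation of Proposition~4.1, where they prove that the dual map $E_2\colon\mathcal{F}\rtimes D(G)\to\mathcal{F}$ is a conditional expectation. There, for positivity, they expand $E_2(\widetilde{T}^*\widetilde{T})$ via the coproduct of the integral of $\widehat{D(G)}$, invoke the $*$-module identity $a(\widetilde{T}^*)=(\widetilde{S}(a)^*(\widetilde{T}))^*$ on the first leg, and verify that the result pairs into a sum of terms of the form $X^*X$---precisely your computation with $D(G)$ replaced by $\widehat{D(G)}$. Your identification $S((t,g)^*)=(t^{-1},g)$, matching the second Sweedler leg of $\Delta(u,g)$, is the $D(G)$ analogue of their step $\widetilde{S}(y,\delta_{t^{-1}})^*=(tyt^{-1},\delta_t)$. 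So your approach is not merely correct but is the method the paper itself adopts whenever it does spell things out.
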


In the following a linear map $\Gamma$ from a unital $C^*$-algebra $B$ onto its unital $C^*$-subalgebra $A$ with properties (1)-(3) in Proposition 2.1 is called a conditional expectation. If $\Gamma$ is a conditional expectation from $B$ onto $A$, then $\Gamma$ is a projection of norm one \cite{O.Bra}. In addition, if $E(Bb)=0$ implies $b=0$, for $b\in B$, then we say $E$ is faithful.

We next review some basic facts about the index for $C^*$-algebras in \cite{Y.Wat}.

\begin{definition}
Let $\Gamma$ be a conditional expectation from a unital $C^*$-algebra $B$ onto its unital $C^*$-subalgebra $A$. A finite family $\{(u_1,v_1), (u_2,v_2),\cdots, (u_n,v_n)\}\subseteq B\times B$ is called a quasi-basis for $\Gamma$ if for all $b\in B$,
 \begin{eqnarray*}
    \begin{array}{c}
\sum\limits_{i=1}^{n}u_i\Gamma(v_ib)=b=\sum\limits_{i=1}^{n}\Gamma(b u_i)v_i.
    \end{array}
\end{eqnarray*}
Furthermore, if there exists a quasi-basis for $\Gamma$, we call $\Gamma$ of index-finite type. In this case we define the index of $\Gamma$ by
 \begin{eqnarray*}
    \begin{array}{c}
{\mathrm{Index}}\ \Gamma=\sum\limits_{i=1}^nu_iv_i.
      \end{array}
\end{eqnarray*}
\end{definition}

\begin{remark}
(1) If $\Gamma$ is a conditional expectation of index-finite type, then the $C^*$-index ${\mathrm{Index}}\ \Gamma$ is a central element of $B$ and does not depend on the choice of quasi-basis.
In particular, if $A\subseteq B$ are simple unital $C^*$-algebras, then we can choose one of the form $\{(w_i,w_i^*)\colon i=1,2,\cdots, n\}$, which shows that ${\mathrm{Index}}\ \Gamma$ is a positive element \cite{Y.Wat}.

(2) Let $N \subseteq M$ be factors of type II$_1$ and $\Gamma\colon M\rightarrow N$ the canonical conditional expectation determined by the unique normalized trace on $M$, then ${\mathrm{Index}}\ \Gamma$ is exactly Jones index $[M:N]$ based on the coupling constant \cite{M.PP}. More generally, let $M$ be a ($\sigma$-finite) factor with a
subfactor $N$ and $\Gamma$ a normal conditional expectation from $M$
onto $N$, then $\Gamma$ is of index-finite if and only if ${\mathrm{Index}}\ \Gamma$ is finite in the sense of Ref. \cite{H.Ko}, and the values of ${\mathrm{Index}}\ \Gamma$ are equal.
\end{remark}

\subsection{The $C^*$-basic construction for the inclusion ${\mathcal{A}}\subseteq{\mathcal{F}}$}
This section will give a concrete description of the $C^*$-basic construction for the inclusion ${\mathcal{A}}\subseteq {\mathcal{F}}$ and some properties about the Jones projection.

Let $\Gamma\colon B\rightarrow A$ be a faithful conditional expectation. Then $B_A$(viewing $B$ as a right $A$-module) is a pre-Hilbert module over $A$ with an $A$-valued inner product
$\langle x,y\rangle=\Gamma(x^*y)$ for $x,y\in  B_A.$
Let $\overline{B_A}$ be the completion of $B_A$ with respect to the norm on $B_A$ defined by
$$\|x\|_{B_A}=\|\Gamma(x^*x)\|_A^{\frac{1}{2}}, \ x\in B_A.$$
Then $\overline{B_A}$ is a Hilbert $C^*$-module over $A$. Since $\Gamma$ is faithful, the canonical map
$B\rightarrow {\overline{B_A}}$ is injective. Let $L_A(\overline{B_A})$ be the set of all (right) $A$-module homomorphisms
$T\colon \overline{B_A}\rightarrow \overline{B_A}$ with an adjoint $A$-module homomorphism $T^*\colon \overline{B_A}\rightarrow \overline{B_A}$ such that
$$\langle T\xi,\eta\rangle=\langle\xi, T^*\eta\rangle.$$
Then $L_A(\overline{B_A})$ is a $C^*$-algebra with the operator norm
$$\|T\|\ =\ \sup\{\|T\xi\|\colon \|\xi\|=1\}.$$
There is an injective $*$-homomorphism $\lambda\colon B\rightarrow L_A(\overline{B_A})$ defined by
$\lambda(b)x=bx$
for $x\in B_A$ and $b\in B$, so that $B$ can be viewed as a $C^*$-subalgebra of $L_A(\overline{B_A})$.
Note
that the map $\gamma_A \colon B_A \rightarrow B_A$ defined by
$\gamma_A(x)=\Gamma(x)$ for $x\in B_A$
is bounded and thus it can be extended to a bounded linear operator on $\overline{B_A}$, denoted by
$\gamma_A$ again. Then $\gamma_A\in L_A(\overline{B_A})$ and $\gamma_A =\gamma_A^2
=\gamma_A^*$; that is, $\gamma_A$ is a projection in
$L_A(\overline{B_A})$. From now on we call $\gamma_A$ the Jones projection of $\Gamma$.
The (reduced) $C^*$-basic construction is a $C^*$-subalgebra of $L_A(\overline{B_A})$ defined to be
$$ \langle B,\gamma_A\rangle_{C^*}= \overline{\mbox{span}\{\lambda(x)\gamma_A\lambda(y)\in L_A(\overline{B_A}) \colon x, y \in B \}}^{\|\cdot\|}.$$

For the conditional expectation $E\colon {\mathcal{F}}\rightarrow{\mathcal{A}}$, we shall consider the $C^*$-basic construction
$\langle{\mathcal{F}},e_{\mathcal{A}}\rangle_{C^*}$, which is a $C^*$-subalgebra of
$L_{\mathcal{A}}(\overline{{\mathcal{F}}})$
linearly generated by
$\{\lambda(x)e_{\mathcal{A}}\lambda(y)\colon x,y\in {\mathcal{F}}\}$,
where $\overline{\mathcal{F}}$ is the completion of ${\mathcal{F}}_{\mathcal{A}}$ with respect to the norm $\|x\|_{{\mathcal{F}}_{\mathcal{A}}}=
\|E(x^*x)\|_{\mathcal{A}}^{\frac{1}{2}}$ and $e_{\mathcal{A}}$ is the Jones projection of $E$.

We will give some properties about the elements in $L_{\mathcal{A}}(\overline{\mathcal{F}})$ as follows.

\begin{proposition}
(1)\ As operators on $\overline{\mathcal{F}}$, we have $e_{\mathcal{A}}Te_{\mathcal{A}}=E(T)e_{\mathcal{A}}$.

(2)\ Let $T\in {\mathcal{F}}$, then
$T\in {\mathcal{A}}$ if and only if $e_{\mathcal{A}}T=Te_{\mathcal{A}}$.
\end{proposition}

\section{$C^*$-isomorphism between ${\mathcal{F}}\rtimes D(G)$ and $\langle{\mathcal{F}},e_{\mathcal{A}}\rangle_{C^*}$}
In this section, we will construct the crossed product $C^*$-algebra
${\mathcal{F}}\rtimes D(G)$ extending ${\mathcal{F}}\equiv{\mathcal{F}}\rtimes I_{D(G)}$ by means of a
Hopf module left action of $D(G)$ on ${\mathcal{F}}$, such that ${\mathcal{F}}\rtimes D(G)$ coincides with the $C^*$-algebra $\langle{\mathcal{F}},e_{\mathcal{A}}\rangle_{C^*}$ constructed from the $C^*$-basic construction for the inclusion ${\mathcal{A}}\subseteq{\mathcal{F}}$.

As we have known, the field algebra ${\mathcal{F}}$ of $G$-spin models is a $D(G)$-module algebra, and one can construct the crossed product $*$-algebra ${\mathcal{F}}_{\mathrm{loc}}\rtimes D(G)$, as a vector space ${\mathcal{F}}_{\mathrm{loc}}\otimes D(G)$ with the $*$-algebra structure
\begin{eqnarray*}
    \begin{array}{rcl}
  \Big(T\otimes (g,h)\Big)\Big(F\otimes (s,t)\Big)
  &=& \sum\limits_{(g,h)}T(g,h)_{(1)}(F)\otimes(g,h)_{(2)}(s,t),\\[5pt]
  \Big(T\otimes (g,h)\Big)^*&=&\Big(I_{\mathcal{F}}\otimes (g,h)^*\Big)\Big(T^*\otimes I_{D(G)}\Big).
       \end{array}
\end{eqnarray*}
Using the $C^*$-inductive limit \cite{B.R.Li}, the crossed product ${\mathcal{F}}_{\mathrm{loc}}\rtimes D(G)$ can be extended naturally to a $C^*$-algebra ${\mathcal{F}}\rtimes D(G)$.

Now, we consider a special element $I_{\mathcal{F}}\rtimes \frac{1}{|G|}\sum\limits_{g\in G}(u,g)$ in ${\mathcal{F}}\rtimes D(G)$.

\begin{proposition}
The element $I_{\mathcal{F}}\rtimes \frac{1}{|G|}\sum\limits_{g\in G}(u,g)$ is a self-adjoint idempotent element. That is
\begin{eqnarray*}
 \begin{array}{c}
\Big(I_{\mathcal{F}}\rtimes \frac{1}{|G|}\sum\limits_{g\in G}(u,g)\Big)^2\ =\
I_{\mathcal{F}}\rtimes \frac{1}{|G|}\sum\limits_{g\in G}(u,g) \ =\
\Big(I_{\mathcal{F}}\rtimes \frac{1}{|G|}\sum\limits_{g\in G}(u,g)\Big)^*.
       \end{array}
\end{eqnarray*}
\end{proposition}
\begin{proof}
We can compute that
\begin{eqnarray*}
 \begin{array}{rcl}
&&\Big(I_{\mathcal{F}}\rtimes \frac{1}{|G|}\sum\limits_{g\in G}(u,g)\Big)^2\\[5pt]
&=&\Big(\sum\limits_{g_1,g_2\in G}\delta_{g_1}(1)\delta_{g_2}(2)\rtimes
 \frac{1}{|G|}\sum\limits_{h\in G}(u,h) \Big)
 \Big(\sum\limits_{s_1,s_2\in G}\delta_{s_1}(1)\delta_{s_2}(2)\rtimes
 \frac{1}{|G|}\sum\limits_{t\in G}(u,t) \Big)\\[5pt]
&=&\frac{1}{|G|^2}\delta_{g_1,hs_1}\delta_{g_2,hs_2}
\sum\limits_{g_1,g_2,s_1,s_2,h,t\in G}
\delta_{g_1}(1)\delta_{g_2}(2)\rtimes (u,ht)\\[5pt]
&=&\frac{1}{|G|^2}\sum\limits_{g_1,g_2,h,t\in G}
\delta_{g_1}(1)\delta_{g_2}(2)\rtimes (u,ht)\\[5pt]
&=&\frac{1}{|G|}\Big(I_{\mathcal{F}}\rtimes \sum\limits_{h\in G}(u,h)\Big),
        \end{array}
\end{eqnarray*}
and
\begin{eqnarray*}
 \begin{array}{rcl}
\Big(I_{\mathcal{F}}\rtimes \frac{1}{|G|}\sum\limits_{g\in G}(u,g)\Big)^*
&=&\frac{1}{|G|}\Big(I_{\mathcal{F}}\rtimes\sum\limits_{g\in G}(u,g)^*\Big)
\Big(I_{\mathcal{F}}\rtimes I_{D(G)}\Big)\\[5pt]
&=&\frac{1}{|G|}\Big(I_{\mathcal{F}}\rtimes\sum\limits_{g\in G}(u,g^{-1})\Big)
\Big(I_{\mathcal{F}}\rtimes I_{D(G)}\Big)\\[5pt]
&=&\frac{1}{|G|}\Big(I_{\mathcal{F}}\rtimes\sum\limits_{g\in G}(u,g)\Big).
        \end{array}
\end{eqnarray*}
Hence, $I_{\mathcal{F}}\rtimes \frac{1}{|G|}\sum\limits_{g\in G}(u,g)$ is a self-adjoint idempotent element.
\end{proof}

\begin{proposition}
The element
$T\rtimes I_{D(G)}$ in ${\mathcal{F}}\rtimes D(G)$ satisfies the following covariant relation

$\Big(I_{\mathcal{F}}\rtimes \frac{1}{|G|}\sum\limits_{g\in G}(u,g)\Big)\Big(T\rtimes I_{D(G)}\Big)\Big(I_{\mathcal{F}}\rtimes \frac{1}{|G|}\sum\limits_{g\in G}(u,g)\Big)
=\Big(E(T)\rtimes I_{D(G)}\Big)\Big(I_{\mathcal{F}}\rtimes \frac{1}{|G|}\sum\limits_{g\in G}(u,g)\Big)$.
\end{proposition}

\begin{proof}
Suppose that $T=\delta_{s_1}(1)\delta_{s_2}(2)
\rho_{t_1}(\frac{1}{2})\rho_{t_2}(\frac{3}{2})$, we can obtain
\begin{eqnarray*}
 \begin{array}{rcl}
&&\Big(I_{\mathcal{F}}\rtimes \frac{1}{|G|}\sum\limits_{g\in G}(u,g)\Big)\Big(T\rtimes I_{D(G)}\Big)\\[5pt]
&=&\frac{1}{|G|}\Big(\sum\limits_{g_1,g_2\in G} \delta_{g_1}(1)\delta_{g_2}(2)\rtimes\sum\limits_{g\in G}(u,g) \Big)\Big(\delta_{s_1}(1)\delta_{s_2}(2)
\rho_{t_1}(\frac{1}{2})\rho_{t_2}(\frac{3}{2})\rtimes\sum\limits_{s\in G}(s,u)\Big)\\[5pt]
&=&\frac{1}{|G|}\sum\limits_{g_1,g_2,g,s\in G}
\delta_{u,t_1t_2s}\delta_{g_1,gs_1}\delta_{g_2,gs_2}
\delta_{g_1}(1)\delta_{g_2}(2)\rho_{gt_1g^{-1}}(\frac{1}{2})
\rho_{gt_2g^{-1}}(\frac{3}{2})\rtimes(gsg^{-1},g)\\[5pt]
&=&\frac{1}{|G|}\sum\limits_{g\in G}
\delta_{gs_1}(1)\delta_{gs_2}(2)\rho_{gt_1g^{-1}}(\frac{1}{2})
\rho_{gt_2g^{-1}}(\frac{3}{2})\rtimes(gt_2^{-1}t_1^{-1}g^{-1},g),
       \end{array}
\end{eqnarray*}
and then
\begin{eqnarray*}
 \begin{array}{rcl}
&&\Big(I_{\mathcal{F}}\rtimes \frac{1}{|G|}\sum\limits_{g\in G}(u,g)\Big)\Big(T\rtimes I_{D(G)}\Big)\Big(I_{\mathcal{F}}\rtimes \frac{1}{|G|}\sum\limits_{g\in G}(u,g)\Big)\\[5pt]
&=&\frac{1}{|G|}\sum\limits_{g\in G}
\delta_{gs_1}(1)\delta_{gs_2}(2)\rho_{gt_1g^{-1}}(\frac{1}{2})
\rho_{gt_2g^{-1}}(\frac{3}{2})\rtimes(gt_2^{-1}t_1^{-1}g^{-1},g)
(\sum\limits_{g_1,g_2\in G}\delta_{g_1}(1)\delta_{g_2}(2)\rtimes\frac{1}{|G|}\sum\limits_{g\in G}(u,g) )\\[5pt]
&=&\frac{1}{|G|^2}\sum\limits_{g,g_1,g_2,f\in G}
\delta_{t_1t_2,u}\delta_{s_1,t_1g_1}\delta_{s_2,g_2}
\delta_{gs_1}(1)\delta_{gs_2}(2)\rho_{gt_1g^{-1}}(\frac{1}{2})
\rho_{gt_2g^{-1}}(\frac{3}{2})\rtimes(u,gf)\\[5pt]
&=&\frac{1}{|G|^2}\delta_{t_1t_2,u}\sum\limits_{g,f\in G}
\delta_{gs_1}(1)\delta_{gs_2}(2)\rho_{gt_1g^{-1}}(\frac{1}{2})
\rho_{gt_2g^{-1}}(\frac{3}{2})\rtimes(u,gf)\\[5pt]
&=&\frac{1}{|G|^2}\delta_{t_1t_2,u}\sum\limits_{g,f\in G}
\delta_{gs_1}(1)\delta_{gs_2}(2)\rho_{gt_1g^{-1}}(\frac{1}{2})
\rho_{gt_2g^{-1}}(\frac{3}{2})\rtimes(u,f).
       \end{array}
\end{eqnarray*}
On the other hand,
\begin{eqnarray*}
 \begin{array}{rcl}
&& E(T)=E\Big(\delta_{s_1}(1)\delta_{s_2}(2)
\rho_{t_1}(\frac{1}{2})\rho_{t_2}(\frac{3}{2})\Big)\\[5pt]
&=&\frac{1}{|G|}\delta_{t_1t_2,u}\sum\limits_{f\in G}\delta_{fs_1}(1)\delta_{fs_2}(2)
\rho_{ft_1f^{-1}}(\frac{1}{2})\rho_{ft_2f^{-1}}(\frac{3}{2}),
        \end{array}
\end{eqnarray*}
and then
\begin{eqnarray*}
 \begin{array}{rcl}
&&\Big(E(T)\rtimes I_{D(G)}\Big)\Big(I_{\mathcal{F}}\rtimes \frac{1}{|G|}\sum\limits_{g\in G}(u,g)\Big)\\[5pt]
&=&\Big(\frac{1}{|G|}\delta_{t_1t_2,u}\sum\limits_{f\in G}\delta_{fs_1}(1)\delta_{fs_2}(2)
\rho_{ft_1f^{-1}}(\frac{1}{2})\rho_{ft_2f^{-1}}(\frac{3}{2})\rtimes
 \sum\limits_{s\in G}(s,u)\Big)\Big(\sum\limits_{g_1,g_2\in G} \delta_{g_1}(1)\delta_{g_2}(2)\rtimes \frac{1}{|G|}\sum\limits_{t\in G}(u,t)\Big)\\[5pt]
&=&\frac{1}{|G|^2}\delta_{t_1t_2,u}
\sum\limits_{f,t,g_1,g_2,s\in G}
\delta_{s,u}\delta_{s_1,t_1f^{-1}g_1}\delta_{fs_2,g_2}
\delta_{fs_1}(1)\delta_{fs_2}(2)
\rho_{ft_1f^{-1}}(\frac{1}{2})\rho_{ft_2f^{-1}}(\frac{3}{2})
\rtimes(u,t)\\[5pt]
&=&\frac{1}{|G|^2}\delta_{t_1t_2,u}
\sum\limits_{f,t\in G}
\delta_{fs_1}(1)\delta_{fs_2}(2)
\rho_{ft_1f^{-1}}(\frac{1}{2})\rho_{ft_2f^{-1}}(\frac{3}{2})
\rtimes(u,t).
        \end{array}
\end{eqnarray*}
From the above, we can obtain the desired result.
\end{proof}

The following theorem is one of main results of this paper, which gives a characterization of the $C^*$-algebra $\langle{\mathcal{F}},e_{\mathcal{A}}\rangle_{C^*}$ constructed from the $C^*$-basic construction for the inclusion ${\mathcal{A}}\subseteq {\mathcal{F}}$.

\begin{theorem}
There exists a $C^*$-isomorphism
between the crossed product $C^*$-algebra ${\mathcal{F}}\rtimes D(G)$ and
the $C^*$-algebra $\langle{\mathcal{F}},e_{\mathcal{A}}\rangle_{C^*}$. That is,
$${\mathcal{F}}\rtimes D(G)\cong \langle{\mathcal{F}},e_{\mathcal{A}}\rangle_{C^*}.$$
\end{theorem}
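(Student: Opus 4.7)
The plan is to realize ${\mathcal{F}}\rtimes D(G)$ concretely inside $L_{\mathcal{A}}(\overline{{\mathcal{F}}})$ via the covariant representation and match it with $\langle{\mathcal{F}},e_{\mathcal{A}}\rangle_{C^*}$. I would define
\[
\Phi\colon{\mathcal{F}}\rtimes D(G)\longrightarrow L_{\mathcal{A}}(\overline{{\mathcal{F}}}),
\qquad \Phi(T\rtimes a)\ =\ \lambda(T)\circ\gamma(a),
\]
where $\gamma(a)$ denotes the extension to $\overline{{\mathcal{F}}}$ of the Hopf action of $a\in D(G)$. Because ${\mathcal{A}}={\mathcal{F}}^{D(G)}$, each $\gamma(a)$ is a right $\mathcal{A}$-module map and thus lies in $L_{\mathcal{A}}(\overline{{\mathcal{F}}})$. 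A direct computation from Lemma 2.1 verifies that $\Phi$ is a $*$-homomorphism: multiplicativity follows from the commutation rule $\gamma(a)\lambda(T)=\sum_{(a)}\lambda(a_{(1)}(T))\gamma(a_{(2)})$, and the $*$-preserving property uses the antipode identity $(a(T))^{*}=S(a^{*})(T^{*})$.

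The central identification is
\[
\Phi\bigl(I_{\mathcal{F}}\rtimes E\bigr)\ =\ e_{\mathcal{A}},
\qquad E\ =\ \frac{1}{|G|}\sum_{g\in G}(u,g),
\]
which holds because $\gamma(E)$ on ${\mathcal{F}}$ is precisely the conditional expectation of Proposition 2.1, and $e_{\mathcal{A}}$ is its canonical extension to $\overline{{\mathcal{F}}}$. Under $\Phi$ the statements of Lemmas 3.1 and 3.2 become the defining idempotency and Jones relation for $e_{\mathcal{A}}$ in the basic construction, so consistency of the picture is automatic.

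To promote $\Phi$ to a $C^{*}$-isomorphism onto $\langle{\mathcal{F}},e_{\mathcal{A}}\rangle_{C^{*}}$, I would work at each finite level. For each $n\in\mathbb{N}$ the restriction $\Phi_{n}\colon{\mathcal{F}}(\Lambda_{n})\rtimes D(G)\to L_{{\mathcal{A}}}(\overline{{\mathcal{F}}(\Lambda_{n})})$ is a $*$-homomorphism between finite-dimensional $C^{*}$-algebras, and its image contains both $\lambda({\mathcal{F}}(\Lambda_{n}))$ and $e_{\mathcal{A}}$, hence contains $\langle{\mathcal{F}}(\Lambda_{n}),e_{\mathcal{A}}\rangle_{C^{*}}$. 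A dimension count forces equality: the crossed-product side has $\mathbb{C}$-dimension $\dim{\mathcal{F}}(\Lambda_{n})\cdot|G|^{2}$, while the Watatani index of $E\colon{\mathcal{F}}(\Lambda_{n})\to{\mathcal{A}}(\Lambda_{n})$ equals $|G|^{2}$, giving $\dim\langle{\mathcal{F}}(\Lambda_{n}),e_{\mathcal{A}}\rangle_{C^{*}}=|G|^{2}\cdot\dim{\mathcal{F}}(\Lambda_{n})$ as well. Thus $\Phi_{n}$ is necessarily a bijection onto $\langle{\mathcal{F}}(\Lambda_{n}),e_{\mathcal{A}}\rangle_{C^{*}}$. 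Compatibility of the $\Phi_{n}$ with the local embeddings $\iota_{n}$ on both sides lets the $C^{*}$-inductive limit deliver the desired isomorphism ${\mathcal{F}}\rtimes D(G)\cong\langle{\mathcal{F}},e_{\mathcal{A}}\rangle_{C^{*}}$.

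The main obstacle is the dimension bookkeeping at each finite level, namely verifying $[{\mathcal{F}}(\Lambda_{n}):{\mathcal{A}}(\Lambda_{n})]=|G|^{2}$; this amounts to exhibiting an explicit quasi-basis for $E$ of cardinality $|G|^{2}$ (a computation foreshadowing Section 4). Without this input one can still show $\langle{\mathcal{F}},e_{\mathcal{A}}\rangle_{C^{*}}\subseteq\operatorname{Im}\Phi$, but the reverse containment and injectivity---which together say precisely that every $\gamma(a)$ is already expressible in terms of $\lambda({\mathcal{F}})$ and $e_{\mathcal{A}}$---are what the dimension match supplies in one stroke.
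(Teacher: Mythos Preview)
Your approach is genuinely different from the paper's and is the more conceptual one. The paper defines its map in the opposite direction, $\Phi_{\frac{1}{2},2}\colon\langle{\mathcal F}(\Lambda_{\frac{1}{2},2}),e_{\mathcal A}\rangle_{C^*}\to{\mathcal F}(\Lambda_{\frac{1}{2},2})\rtimes D(G)$, by assigning $T\mapsto T\rtimes I_{D(G)}$ and $e_{\mathcal A}\mapsto I_{\mathcal F}\rtimes\frac{1}{|G|}\sum_g(u,g)$ on generators, then checks compatibility with the relations via Lemmas~2.2 and~3.2. Bijectivity is handled entirely by hand: surjectivity by writing down an explicit preimage for each basis element $T\rtimes(g,h)$, and injectivity by exhibiting a reduction of every spanning element $T_1e_{\mathcal A}T_2$ to a canonical form, thereby bounding the dimension of the basic construction from above by $|G|^6$. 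Your covariant-representation realization $\Phi(T\rtimes a)=\lambda(T)\gamma(a)$ into $L_{\mathcal A}(\overline{\mathcal F})$ packages all of this more cleanly; both routes agree on the key identification of $e_{\mathcal A}$ with the integral element.

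There is, however, a genuine gap in your dimension count. The assertion that $\mathrm{Index}\,E=|G|^2$ implies $\dim\langle{\mathcal F}(\Lambda_n),e_{\mathcal A}\rangle_{C^*}=|G|^2\cdot\dim{\mathcal F}(\Lambda_n)$ is not a valid general formula: for a non-tracial expectation $E\colon M_2(\mathbb C)\to\mathbb C$ the index can exceed $4$ while the basic construction still has dimension $16$. Knowing the scalar value of the index alone does not determine $\dim\langle B,e_A\rangle_{C^*}$; what one needs is that $B$ is \emph{free} of rank $|G|^2$ as a right $A$-module (equivalently, that the quasi-basis is an honest $A$-basis), so that $\langle B,e_A\rangle_{C^*}\cong M_{|G|^2}(A)$. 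This is a stronger statement than finite index and is precisely what the paper's explicit canonical-form computation establishes without naming it. Alternatively, you could bypass the dimension count altogether by proving that the $D(G)$-extension ${\mathcal A}(\Lambda_n)\subseteq{\mathcal F}(\Lambda_n)$ is Hopf--Galois, which gives ${\mathcal F}(\Lambda_n)\rtimes D(G)\cong\mathrm{End}_{{\mathcal A}(\Lambda_n)}({\mathcal F}(\Lambda_n))$ directly and makes $\Phi_n$ an isomorphism by construction.
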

\begin{proof}
 We first show that the action of $D(G)$ on $\mathcal{F}$ is faithful, that is,
  $(g,h)F=(s,t)F$ for any $F\in{\mathcal{F}}$ implies $(g,h)=(s,t)$.
To this end let $F=\sum\limits_{f\in G}\delta_{f}(x)$, then $(g,h)(\sum\limits_{f\in G}\delta_{f}(x))=(s,t)(\sum\limits_{f\in G}\delta_{f}(x))$,
and therefore
\begin{eqnarray*}
    \begin{array}{c}
    \sum\limits_{f\in G}\delta_{g,u}\delta_{hf}(x)
=\sum\limits_{f\in G}\delta_{s,u}\delta_{tf}(x).
\end{array}
\end{eqnarray*}
Using $\sum\limits_{f\in G}\delta_{hf}(x)=I$, we have $\delta_{g,u}=\delta_{s,u}$ and then
$g=s$.
Let now $F=\delta_{u}(x)\rho_{h^{-1}gh}(l)$, then we conclude $(g,h)(\delta_{u}(x)\rho_{h^{-1}gh}(l))=(s,t)(\delta_{u}(x)\rho_{h^{-1}gh}(l))$,
which proves
\begin{eqnarray*}
    \begin{array}{rcl}
\sum\limits_{a\in G}(a,h)\delta_{u}(x)(a^{-1}g,h)\rho_{h^{-1}gh}(l)
 &=& \sum\limits_{b\in G}(b,t)\delta_{u}(x)(b^{-1}g,t)\rho_{h^{-1}gh}(l), \\[5pt]
 \sum\limits_{a\in G}\delta_{a,u}\delta_{h}(x)
 \delta_{a^{-1}g,g}\rho_{a^{-1}g}(l)
 &=& \sum\limits_{b\in G}\delta_{b,u}\delta_{t}(x)
 \delta_{b^{-1}g,th^{-1}ght^{-1}}\rho_{b^{-1}g}(l).
 \end{array}
\end{eqnarray*}
Putting $a=u=b$, we get
$$\delta_{h}(x)\rho_{g}(l)= \delta_{g,th^{-1}ght^{-1}}\delta_{t}(x)\rho_{g}(l),$$
and therefore $h=t$.
Hence we get the action of ${\mathcal{F}}\rtimes D(G)$ on $\mathcal{F}$ is also faithful.

Secondly, we have proved in \cite{XINQL} that for a fixed $k\in\Bbb{Z}$, the set \begin{eqnarray*}
    \begin{array}{c}
    \{|G|^{1/2}\delta_g(k)\rho_h(k+\frac{1}{2}),|G|^{1/2}(\delta_g(k)\rho_h(k+\frac{1}{2}))^*: g,h\in G\}
 \end{array}
\end{eqnarray*}
is a quasi basis of $\mathcal{F}$ over $\mathcal{A}$, implying the index is finite type in the sense of Watatani. It then follows from Propositon 1.3.3 in \cite{Y.Wat} that $\langle{\mathcal{F}},e_{\mathcal{A}}\rangle_{C^*}$ is the same as $B({\mathcal{F}})$, the algebra of bounded, adjointable operators in ${\mathcal{F}}_{\mathcal{A}}$ (with the standard ${\mathcal{A}}$-valued inner product).

Finally, the element $e_{\mathcal{A}}$ is represented by an element in $D(G)$,
so combined with the previous bullet point, this yields
$C^*$-algebra $\langle {\mathcal{F}},e_{\mathcal{A}}\rangle_{C^*}$
coincides with the crossed product $C^*$-algebra ${\mathcal{F}}\rtimes D(G)$.
\end{proof}

\begin{remark}
From Theorem 3.1, we know that the $C^*$-basic constructions do not depend on the choice of conditional expectations, which can also be seen in Proposition 2.10.11 \cite{Y.Wat}.
\end{remark}

\section{The $C^*$-basic construction for the inclusion ${\mathcal{F}}\subseteq {\mathcal{F}}\rtimes D(G)$}

In this section, we continue to investigate the crossed
product $C^*$-algebra ${\mathcal{F}}\rtimes D(G)$, and the natural $\widehat{D(G)}$-module algebra action on ${\mathcal{F}}\rtimes D(G)$, which gives rise to the iterated crossed product $C^*$-algebra ${\mathcal{F}}\rtimes D(G)\rtimes\widehat{D(G)}$.
The fixed point algebra under this action is given by ${\mathcal{F}}\equiv{\mathcal{F}}\rtimes I{_D(G)}$, which is consistent with
the range of the conditional expectation $E_2$.
We then prove that the $C^*$-algebra $\langle{\mathcal{F}}\rtimes D(G),e_2\rangle_{C^*}$ constructed from the
$C^*$-basic construction
for the inclusion
${\mathcal{F}}\subseteq{\mathcal{F}}\rtimes D(G)$ is precisely $C^*$-isomorphic to the iterated crossed product $C^*$-algebra ${\mathcal{F}}\rtimes D(G)\rtimes\widehat{D(G)}$.

Since $D(G)$ is of finite dimension and $\widehat{{C(G)\otimes \Bbb{C}G}}\cong \Bbb{C}G\otimes C(G)$ as algebras, $\{(y,\delta_x)\colon y,x\in G\}$ can be viewed as a linear basis of $\widehat{D(G)}$. As the above states, the structure maps on $\widehat{D(G)}$ are the following
\begin{eqnarray*}
 \begin{array}{rcll}
   \widetilde{\bigtriangleup}(y,\delta_x)
&=&\sum \limits_{t\in G}(y,\delta_{t^{-1}})\otimes(tyt^{-1},\delta_{tx}),
&(\mbox{coproduct})\\
   (y,\delta_x)(w,\delta_z)
&=&\delta_{x,z}(yw,\delta_x),&(\mbox{multiplication})\\[5pt]
   (y,\delta_x)^*
&=&(y^{-1},\delta_x),&(\mbox{$*$-operation})\\[5pt]
    \widetilde{\varepsilon}(y,\delta_x)
&=&\delta_{x,u},&(\mbox{counit})\\[5pt]
   \widetilde{S}(y,\delta_x)
&=&(x^{-1}y^{-1}x,\delta_{x^{-1}}).&(\mbox{antipode})
 \end{array}
\end{eqnarray*}
It is easy to see that $I_{\widehat{D(G)}}=
\sum\limits_{x\in G}(u,\delta_x)$, and there is a unique element $E_2=\frac{1}{|G|}\sum\limits_{y\in G}(y,\delta_u)$, called an integral element, satisfying for any $b\in \widehat{D(G)}$,
$$bE_2=E_2b=\varepsilon(b)E_2.$$

The map $\sigma\colon \widehat{D(G)}\times ({\mathcal{F}}\rtimes D(G))\rightarrow {\mathcal{F}}\rtimes D(G)$ given on the generating elements of ${\mathcal{F}}\rtimes D(G)$ as
$$\sigma\Big((y,\delta_x)\times(F\otimes (g,h))\Big)=\delta_{x,h}\Big(F\otimes(gy^{-1},h)\Big)$$
for $(g,h)\in D(G)$, can be linearly extended both in $\widehat{D(G)}$ and ${\mathcal{F}}\rtimes D(G)$.
Here and from now on, by $(y,\delta_x)(F\otimes (g,h))$ we always denote $\sigma((y,\delta_x)\times(F\otimes (g,h)))$ for notational
convenience.

In particular, considering the action of $E_2$ on ${\mathcal{F}}\rtimes D(G)$, we can obtain that
\begin{eqnarray*}
 \begin{array}{c}
E_2(F\otimes (g,h))
 =\frac{1}{|G|}\sum\limits_{y\in G}(y,\delta_u)(F\otimes (g,h))
 =\frac{1}{|G|}\sum\limits_{y\in G}\delta_{h,u}(F\otimes(gy^{-1},h))
 =\frac{1}{|G|}\delta_{h,u}(F\otimes I_{D(G)}),
\end{array}
\end{eqnarray*}
which means that the range of $E_2$ on ${\mathcal{F}}\rtimes D(G)$ is contained in ${\mathcal{F}}$. Moreover, we can show that
$E_2$ is a positive map preserving the unit and possessing the bimodular property. Namely,

\begin{proposition}
The map $E_2\colon {\mathcal{F}}\rtimes D(G)\rightarrow {\mathcal{F}}$ is a conditional expectation.
\end{proposition}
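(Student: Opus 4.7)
The plan is to verify the three defining properties of a conditional expectation (Proposition 2.1 (1)--(3)) for the map $E_2$, directly from the explicit formula
\begin{equation*}
E_2(F\otimes(g,h)) \ =\ \tfrac{1}{|G|}\,\delta_{h,u}\,(F\otimes I_{D(G)})
\end{equation*}
already derived from the integral element $E_2 = \tfrac{1}{|G|}\sum_{y\in G}(y,\delta_u)$ of $\widehat{D(G)}$ and the action $\sigma$.

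Unit preservation and the bimodular property are computations on basis elements. For the unit, I would write $I = I_{\mathcal{F}}\otimes I_{D(G)} = \sum_{g\in G}(I_{\mathcal{F}}\otimes(g,u))$; each summand contributes $\tfrac{1}{|G|}(I_{\mathcal{F}}\otimes I_{D(G)})$, and the $|G|$ terms add to $I$. For bimodularity, I take $F_1,F_2\in{\mathcal{F}}$ and a basis element $x = F\otimes(g,h)$ and expand $(F_1\otimes I_{D(G)})\,x\,(F_2\otimes I_{D(G)})$ using the crossed-product multiplication. Two Hopf-algebraic facts drive the calculation: $\Delta(I_{D(G)}) = I_{D(G)}\otimes I_{D(G)}$ together with $I_{D(G)}$ acting as the identity on ${\mathcal{F}}$ pushes $F_1\otimes I_{D(G)}$ through as ordinary left multiplication, while $\Delta(g,h) = \sum_t(t,h)\otimes(t^{-1}g,h)$ rewrites $x(F_2\otimes I_{D(G)})$ as $\sum_t F(t,h)(F_2)\otimes(t^{-1}g,h)$. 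Applying $E_2$ introduces the factor $\delta_{h,u}$, and in the surviving case $h = u$ the sum $\sum_t(t,u)(F_2)$ collapses to $I_{D(G)}(F_2) = F_2$; the result is $\tfrac{1}{|G|}\delta_{h,u}(F_1 F F_2\otimes I_{D(G)})$, matching $(F_1\otimes I_{D(G)})\,E_2(x)\,(F_2\otimes I_{D(G)})$ computed directly.

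Positivity is the main obstacle. My preferred route is via Tomiyama's theorem. A short computation using the basis formula gives $E_2\circ E_2 = E_2$, so $E_2$ is an idempotent linear map onto the $C^*$-subalgebra ${\mathcal{F}}\otimes I_{D(G)}\cong{\mathcal{F}}$ of ${\mathcal{F}}\rtimes D(G)$ (the latter being the $C^*$-inductive limit of the finite-dimensional $C^*$-algebras ${\mathcal{F}}(\Lambda_n)\rtimes D(G)$). Once the norm bound $\|E_2(x)\|\le\|x\|$ is established on each finite level, it persists under the inductive limit, and Tomiyama's theorem then upgrades a unital idempotent of norm one onto a $C^*$-subalgebra to a positive conditional expectation automatically. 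To obtain the norm bound at each finite level, I would either identify $E_2$ on ${\mathcal{F}}(\Lambda_n)\rtimes D(G)$ with the dual conditional expectation coming from the $C^*$-basic construction of Theorem 3.1 (for which Watatani's general theory furnishes complete positivity and norm one directly, once the quasi-basis announced in the introduction is exhibited), or proceed concretely: expand $x = \sum_{g,h}F_{g,h}\otimes(g,h)$, compute $E_2(x^*x)$ from the multiplication and $*$-rules spelled out in Section 3, and observe that the Kronecker constraints arising from the crossed product together with $\delta_{h,u}$ from $E_2$ collapse the double sum to a manifestly positive combination of products $F_{g,h}^*F_{g,h}$. Either route closes the verification that $E_2$ is a conditional expectation.
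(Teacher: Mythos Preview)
Your verifications of unit preservation and the bimodular property are correct and close to the paper's, though the paper phrases both through the $\widehat{D(G)}$-module algebra formalism rather than the explicit basis formula: it applies the iterated coproduct $\widetilde{\Delta}^{(2)}$ of the integral $\tfrac{1}{|G|}\sum_y(y,\delta_u)$ to $T_1\widetilde{T}T_2$ and uses that the $\sigma$-action on elements of $\mathcal{F}\subset\mathcal{F}\rtimes D(G)$ is via the counit $\widetilde{\varepsilon}$.

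For positivity the paper's route is substantially more direct than either of yours. It again exploits the module-algebra structure: writing $E_2(\widetilde{T}^*\widetilde{T}) = \tfrac{1}{|G|}\sum_y(y,\delta_u)(\widetilde{T}^*\widetilde{T})$, one expands with the coproduct $\widetilde{\Delta}(y,\delta_u) = \sum_t(y,\delta_{t^{-1}})\otimes(tyt^{-1},\delta_t)$ and then invokes the $*$-compatibility $b(\widetilde{T}^*) = (\widetilde{S}(b)^*\widetilde{T})^*$ of the action. A one-line check that $\widetilde{S}(y,\delta_{t^{-1}})^* = (tyt^{-1},\delta_t)$ immediately gives
\[
E_2(\widetilde{T}^*\widetilde{T}) \;=\; \tfrac{1}{|G|}\sum_{y,t}\bigl((tyt^{-1},\delta_t)\widetilde{T}\bigr)^*\bigl((tyt^{-1},\delta_t)\widetilde{T}\bigr),
\]
a manifest sum of positives in $\mathcal{F}\rtimes D(G)$, hence positive in the $C^*$-subalgebra $\mathcal{F}$. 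No norm estimate or appeal to Tomiyama is needed. Your Tomiyama route would ultimately work but requires either the quasi-basis of Theorem~4.1 (which the paper establishes only afterwards, so you would have to decouple that argument) or an independent norm bound you have not supplied. Your alternative claim that the direct expansion ``collapses to a manifestly positive combination of products $F_{g,h}^*F_{g,h}$'' is too optimistic as stated: the crossed-product adjoint mixes the $(g,h)$ indices through the $D(G)$-action on $\mathcal{F}$, and a clean sum-of-squares form only emerges once the calculation is organised via the $\widehat{D(G)}$-action as the paper does.
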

\begin{proof}
(1)\ $E_2(I_{{\mathcal{F}}\rtimes D(G)})=E_2(I_{\mathcal{F}}\otimes \sum\limits_{g\in G}(g,u))=\frac{1}{|G|}\sum\limits_{g\in G}(I_{\mathcal{F}}\otimes I_{D(G)})=I_{\mathcal{F}}.$

 (2)\ $\forall T_1, T_2\in{\mathcal{F}}, \widetilde{T}\in {\mathcal{F}}\rtimes D(G)$, we have
 \begin{eqnarray*}
    \begin{array}{rcl}
 E_2(T_1\widetilde{T}T_2)
 &=&\frac{1}{|G|}\sum\limits_{y\in G}(y,\delta_u)(T_1\widetilde{T}T_2)\\[5pt]
 &=&\frac{1}{|G|}\sum\limits_{y,t_1,t_2\in G}(y,\delta_{t_1^{-1}})(T_1)(t_1yt_1^{-1},\delta_{t_1t_2^{-1}})
 (\widetilde{T})
 (t_2yt_2^{-1},\delta_{t_2})(T_2)\\[5pt]
 &=&\frac{1}{|G|}\sum\limits_{y,t_1,t_2\in G}
 \widetilde{\varepsilon}(y,\delta_{t_1^{-1}})(T_1)
 (t_1yt_1^{-1},\delta_{t_1t_2^{-1}})(\widetilde{T})
 \widetilde{\varepsilon}(t_2yt_2^{-1},\delta_{t_2})(T_2)\\[5pt]
 &=&\frac{1}{|G|}\sum\limits_{y,t_1,t_2\in G}
 \delta_{t_1^{-1},u}(T_1)
 (t_1yt_1^{-1},\delta_{t_1t_2^{-1}})(\widetilde{T})
 \delta_{t_2,u}(T_2)\\[5pt]
 &=&\frac{1}{|G|}\sum\limits_{y\in G}T_1(y,\delta_u)(\widetilde{T})T_2\\[5pt]
 &=&T_1E_2(\widetilde{T})T_2.
 \end{array}
\end{eqnarray*}

(3)\ We note the relation for any
 $\widetilde{T}\in {\mathcal{F}}\rtimes D(G)$,
 \begin{eqnarray*}
    \begin{array}{rcl}
    E_2(\widetilde{T}^*\widetilde{T})
 &=&\frac{1}{|G|}\sum\limits_{y\in G}(y,\delta_u)(\widetilde{T}^*\widetilde{T})\\[5pt]
 &=&\frac{1}{|G|}\sum\limits_{y,t\in G}(y,\delta_{t^{-1}})(\widetilde{T}^*)
 (tyt^{-1},\delta_t)(\widetilde{T})\\[5pt]
 &=&\frac{1}{|G|}\sum\limits_{y,t\in G}(\widetilde{S}(y,\delta_{t^{-1}})^*
 (\widetilde{T}))^*(tyt^{-1},\delta_t)(\widetilde{T})\\[5pt]
 &=&\frac{1}{|G|}\sum\limits_{y,t\in G}
 ((tyt^{-1},\delta_t)(F))^*(tyt^{-1},\delta_t)(\widetilde{T}),
  \end{array}
\end{eqnarray*}
which means $E_2$ is a positive map on ${\mathcal{F}}\rtimes D(G)$.
\end{proof}

\begin{proposition}
The map $\sigma$ defines a Hopf module left action of $\widehat{D(G)}$ on ${\mathcal{F}}\rtimes D(G)$. That is ${\mathcal{F}}\rtimes D(G)$ is a left $\widehat{D(G)}$-module algebra.
\end{proposition}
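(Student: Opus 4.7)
The plan is to verify on generators the two conditions required of a left $\widehat{D(G)}$-module algebra, namely (i) $\sigma$ descends to a unital associative action of the algebra $\widehat{D(G)}$ on ${\mathcal{F}}\rtimes D(G)$, and (ii) for every $b\in\widehat{D(G)}$ and every $X,Y\in{\mathcal{F}}\rtimes D(G)$,
\begin{eqnarray*}
b(XY)=\sum_{(b)}b_{(1)}(X)\,b_{(2)}(Y),
\end{eqnarray*}
with the comultiplication $\widetilde{\bigtriangleup}$ described earlier. Once these are checked on the basis elements $(y,\delta_x)$ and on generators of ${\mathcal{F}}\rtimes D(G)$, linearity and the norm-continuity established through the $C^*$-inductive limit extend the conclusion to the full algebra.

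First I would confirm unitality. Since $I_{\widehat{D(G)}}=\sum_{x\in G}(u,\delta_x)$, applying the defining formula of $\sigma$ gives $I_{\widehat{D(G)}}(F\otimes(g,h))=\sum_{x\in G}\delta_{x,h}(F\otimes(gu^{-1},h))=F\otimes(g,h)$. Next I would check associativity of the action by computing, for $(y_1,\delta_{x_1})$ and $(y_2,\delta_{x_2})$ and any basis element $F\otimes(g,h)$,
\begin{eqnarray*}
(y_1,\delta_{x_1})\bigl((y_2,\delta_{x_2})(F\otimes(g,h))\bigr)=\delta_{x_2,h}\delta_{x_1,h}\bigl(F\otimes(gy_2^{-1}y_1^{-1},h)\bigr),
\end{eqnarray*}
and comparing with the action of the product $(y_1,\delta_{x_1})(y_2,\delta_{x_2})=\delta_{x_1,x_2}(y_1y_2,\delta_{x_1})$, which produces $\delta_{x_1,x_2}\delta_{x_1,h}(F\otimes(g(y_1y_2)^{-1},h))$. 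The two $\delta$-masks agree precisely when the common value is $h$, so the two expressions coincide.

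The main step, and the one I expect to be the computational obstacle, is the module algebra identity. I would take two basis elements $X_i=F_i\otimes(g_i,h_i)$, $i=1,2$, and unpack the product $X_1X_2$ using the defining multiplication of the crossed product together with the coproduct $\bigtriangleup(g_1,h_1)=\sum_{t}(t,h_1)\otimes(t^{-1}g_1,h_1)$ on $D(G)$, obtaining a sum of terms of the form $F_1(t,h_1)(F_2)\otimes(t^{-1}g_1,h_1)(g_2,h_2)$. Applying $(y,\delta_x)$ to this sum then kills all terms except those for which the second $D(G)$-factor carries label $h=x$, which by the multiplication rule in $D(G)$ forces $h_1h_2=x$ and leaves a specific shift by $y^{-1}$ on the first component of the $D(G)$-label. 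On the other side I would evaluate $\sum_{t\in G}(y,\delta_{t^{-1}})(X_1)\,(tyt^{-1},\delta_{tx})(X_2)$ using $\widetilde{\bigtriangleup}(y,\delta_x)=\sum_t(y,\delta_{t^{-1}})\otimes(tyt^{-1},\delta_{tx})$; the first factor imposes $h_1=t^{-1}$ while the second imposes $h_2=tx$, so only a single term in $t$ survives, and after substituting $t=h_1^{-1}$ and simplifying the conjugations the resulting element matches the left-hand side term by term.

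Finally I would observe that both sides are linear in $b$ and in each of $X,Y$, so the identity on basis elements extends to the whole of $\widehat{D(G)}\times({\mathcal{F}}\rtimes D(G))^{\times 2}$, and by continuity of the action (inherited from the finite-dimensional case ${\mathcal{F}}(\Lambda_n)\rtimes D(G)$ via the inductive limit) the module algebra property holds on all of ${\mathcal{F}}\rtimes D(G)$. The hard part is really the bookkeeping of conjugations and $\delta$-constraints in the compatibility identity; everything else is an immediate check on generators.
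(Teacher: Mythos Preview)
Your approach is the same as the paper's: verify the module algebra axioms directly on basis elements $(y,\delta_x)\in\widehat{D(G)}$ and $F\otimes(g,h)\in{\mathcal{F}}\rtimes D(G)$. Your checks of unitality, associativity of the action, and the coproduct compatibility $b(XY)=\sum_{(b)}b_{(1)}(X)b_{(2)}(Y)$ are correct and match the paper's computations.

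However, there is a genuine gap. In this paper's setting a ``Hopf module left action'' is understood in the $*$-algebra sense of Lemma~2.1, which requires in addition the star compatibility
\[
b(T^*)=\bigl(\widetilde{S}(b)^*(T)\bigr)^*,\qquad b\in\widehat{D(G)},\ T\in{\mathcal{F}}\rtimes D(G).
\]
The paper's own proof explicitly verifies this third relation by computing both $(y,\delta_x)\bigl(F\otimes(g,h)\bigr)^*$ and $\bigl(\widetilde{S}(y,\delta_x)^*(F\otimes(g,h))\bigr)^*$ and showing they agree. Your proposal omits this condition entirely. Without it the action is merely an algebra action compatible with multiplication, not a Hopf $*$-module action, and in particular the later use of $E_2$ as a \emph{positive} map (Proposition~4.1(3)) relies on exactly this compatibility. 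You should add the computation using $\widetilde{S}(y,\delta_x)=(x^{-1}y^{-1}x,\delta_{x^{-1}})$ and the explicit form of $(F\otimes(g,h))^*$ given in Section~3.
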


\begin{proof}
It suffices to check that the map $\sigma\colon \widehat{D(G)}\times ({\mathcal{F}}\rtimes D(G))\rightarrow {\mathcal{F}}\rtimes D(G)$ satisfies the following relations:
\begin{eqnarray*}
    \begin{array}{rcl}
\Big((y,\delta_x)(w,\delta_z)\Big)\Big(F\otimes(g,h)\Big)&=&
(y,\delta_x)\Big((w,\delta_z)(F\otimes(g,h))\Big),\\[5pt]
    (y,\delta_x)\Big((F\otimes(g_1,h_1))(T\otimes(g_2,h_2))\Big)&=&
\sum\limits_{(y,\delta_x)}\Big((y,\delta_x)_{(1)}(F\otimes(g_1,h_1))\Big)
\Big((y,\delta_x)_{(2)}(T\otimes(g_2,h_2))\Big),\\[5pt]
    (y,\delta_x)\Big(F\otimes(g,h)\Big)^*&=&
\Big(\widetilde{S}(y,\delta_x)^*(F\otimes(g,h))\Big)^*,
    \end{array}
\end{eqnarray*}
 for $(y,\delta_x),(w,\delta_z)\in \widehat{D(G)}$, $T,F\in{\mathcal{F}}$ and $(g_i,h_i),(g,h)\in D(G)$ for $i=1,2$.

As to the first equality, we compute
\begin{eqnarray*}
    \begin{array}{clcl}
    \Big((y,\delta_x)(w,\delta_z)\Big)(F\otimes(g,h))
&=&\delta_{x,z}(yw,\delta_x)(F\otimes(g,h))\\[5pt]
&=&\delta_{x,z}\delta_{x,h}(F\otimes(gw^{-1}y^{-1},h))\\[5pt]
&=&\delta_{z,h}\delta_{x,h}(F\otimes(gw^{-1}y^{-1},h))\\[5pt]
&=&\delta_{z,h}(y,\delta_x)(F\otimes(gw^{-1},h))\\[5pt]
&=&(y,\delta_x)\Big((w,\delta_z)(F\otimes(g,h))\Big).
    \end{array}
\end{eqnarray*}

Next,
\begin{eqnarray*}
    \begin{array}{rcl}
  && (y,\delta_x)\Big((F\otimes(g_1,h_1))(T\otimes(g_2,h_2))\Big)\\[5pt]
&=&\sum\limits_{(g_1,h_1)}(y,\delta_x)
 \Big(F(g_1,h_1)_{(1)}T\otimes(g_1,h_1)_{(2)}(g_2,h_2)\Big)\\[5pt]
&=&\sum\limits_{f\in G}(y,\delta_x)
 \Big(F(f,h_1)T\otimes(f^{-1}g_1,h_1)(g_2,h_2)\Big)\\[5pt]
&=&\sum\limits_{f\in G}(y,\delta_x)
 \Big(F(f,h_1)T\otimes\delta_{f^{-1}g_1h_1,h_1g_2}(f^{-1}g_1,h_1h_2)\Big)\\[5pt]
&=&(y,\delta_x)
 \Big(F(g_1h_1g_2^{-1}h_1^{-1},h_1)T\otimes(h_1g_2h_1^{-1},h_1h_2)\Big)\\[5pt]
&=&\delta_{x,h_1h_2}F(g_1h_1g_2^{-1}h_1^{-1},h_1)T\otimes
(h_1g_2h_1^{-1}y^{-1},h_1h_2),
    \end{array}
\end{eqnarray*}
and
\begin{eqnarray*}
    \begin{array}{rcl}
   &&\sum\limits_{(y,\delta_x)}\Big((y,\delta_x)_{(1)}
   (F\otimes(g_1,h_1))\Big)
\Big((y,\delta_x)_{(2)}(T\otimes(g_2,h_2))\Big)\\[5pt]
&=&\sum\limits_{t\in G}\Big((y,\delta_{t^{-1}})(F\otimes(g_1,h_1))\Big)
\Big((tyt^{-1},\delta_{tx})(T\otimes(g_2,h_2))\Big)\\[5pt]
&=&\sum\limits_{t\in G}\delta_{t^{-1},h_1}\delta_{tx,h_2}
\Big(F\otimes(g_1y^{-1},h_1)\Big)\Big(T\otimes(g_2ty^{-1}t^{-1},h_2)\Big)\\[5pt]
&=&\delta_{x,h_1h_2}
\Big(F\otimes(g_1y^{-1},h_1)\Big)
\Big(T\otimes(g_2h_1^{-1}y^{-1}h_1,h_2)\Big)\\[5pt]
&=&\sum\limits_{f\in G}\delta_{x,h_1h_2}
F(f,h_1)T\otimes(f^{-1}g_1y^{-1},h_1)(g_2h_1^{-1}y^{-1}h_1,h_2)\\[5pt]
&=&\delta_{x,h_1h_2}\delta_{f^{-1}g_1,h_1g_2h_1^{-1}}
F(f,h_1)T\otimes(f^{-1}g_1y^{-1},h_1h_2)\\[5pt]
&=&\delta_{x,h_1h_2}F(g_1h_1g_2^{-1}h_1^{-1},h_1)T\otimes
(h_1g_2h_1^{-1}y^{-1},h_1h_2).
    \end{array}
\end{eqnarray*}
Thus, we obtain that
\begin{eqnarray*}
    \begin{array}{c}
       (y,\delta_x)\Big((F\otimes(g_1,h_1))(T\otimes(g_2,h_2))\Big)=
\sum\limits_{(y,\delta_x)}\Big((y,\delta_x)_{(1)}(F\otimes(g_1,h_1))\Big)
\Big((y,\delta_x)_{(2)}(T\otimes(g_2,h_2))\Big).
    \end{array}
\end{eqnarray*}

To prove the third equation, we can calculate
\begin{eqnarray*}
    \begin{array}{rcl}
(y,\delta_x)\Big(F\otimes(g,h)\Big)^*&=&(y,\delta_x)\Big((I_{\mathcal{F}}
\otimes(g,h)^*)(F^*\otimes I_{D(G)})\Big)\\[5pt]
&=&(y,\delta_x)\Big((I_{\mathcal{F}}
\otimes(h^{-1}gh,h^{-1}))(F^*\otimes I_{D(G)})\Big)\\[5pt]
&=&\sum\limits_{f\in G}
(y,\delta_x)\Big((f,h^{-1})F^*\otimes(f^{-1}h^{-1}gh,h^{-1})\Big)\\[5pt]
&=&\sum\limits_{f\in G}\delta_{x,h^{-1}}
(f,h^{-1})F^*\otimes(f^{-1}h^{-1}ghy^{-1},h^{-1}),
    \end{array}
\end{eqnarray*}
and
\begin{eqnarray*}
    \begin{array}{rcl}
\Big(\widetilde{S}(y,\delta_x)(F\otimes(g,h))\Big)^*
&=&\Big((x^{-1}yx,\delta_{x^{-1}})(F\otimes(g,h))\Big)^*\\[5pt]
&=&\delta_{x^{-1},h}\Big(F\otimes(gx^{-1}y^{-1}x,h)\Big)^*\\[5pt]
&=&\sum\limits_{f\in G}\delta_{x,h^{-1}}
(f,h^{-1})F^*\otimes(f^{-1}h^{-1}ghy^{-1},h^{-1}).
    \end{array}
\end{eqnarray*}
\end{proof}

From Proposition 4.2, we can construct the crossed product $({\mathcal{F}}\rtimes D(G))\rtimes\widehat{D(G)}$,
which is called the iterated crossed product $C^*$-algebra.

In the following, we will consider the $\widehat{D(G)}$-invariant subalgebra of ${\mathcal{F}}\rtimes D(G)$. To do this, set
$$({\mathcal{F}}\rtimes D(G))^{\widehat{D(G)}}\ =\
\{\widetilde{T}\in {\mathcal{F}}\rtimes D(G)\colon b(\widetilde{T})=\widetilde{\varepsilon}(b)(\widetilde{T}), \ \ \forall b\in \widehat{D(G)}\}.$$
 One can show that $({\mathcal{F}}\rtimes D(G))^{\widehat{D(G)}}$ is a $C^*$-subalgebra of ${\mathcal{F}}\rtimes D(G)$. Furthermore, $$({\mathcal{F}}\rtimes D(G))^{\widehat{D(G)}}\ =\ \{ \widetilde{T}\in {\mathcal{F}}\rtimes D(G)\colon E_2(\widetilde{T})=\widetilde{T}\}.$$
In fact, for $\widetilde{T}\in ({\mathcal{F}}\rtimes D(G))^{\widehat{D(G)}}\subseteq {\mathcal{F}}\rtimes D(G)$, we can compute that
\begin{eqnarray*}
    \begin{array}{c}
    E_2(\widetilde{T})=\frac{1}{|G|}\sum\limits_{y\in G}(y,\delta_u)
    (\widetilde{T})
    =\frac{1}{|G|}\sum\limits_{y\in G}\widetilde{\varepsilon}(y,\delta_u)(\widetilde{T})
    =\frac{1}{|G|}\sum\limits_{y\in G}\widetilde{T}
    =\widetilde{T}.
   \end{array}
\end{eqnarray*}
For the converse, suppose that $ \widetilde{T}\in {\mathcal{F}}\rtimes D(G)$ with $E_2(\widetilde{T})=\widetilde{T}$. Then for any $(w,\delta_z)\in \widehat{D(G)}$, we have
\begin{eqnarray*}
    \begin{array}{rcl}
    (w,\delta_z)(\widetilde{T})
    &=&(w,\delta_z)E_2(\widetilde{T})\\[5pt]
    &=&(w,\delta_z)\frac{1}{|G|}\sum\limits_{y\in G}(y,\delta_u)
    (\widetilde{T})\\[5pt]
    &=&\frac{1}{|G|}\sum\limits_{y\in G}\delta_{z,u}(wy,\delta_u)(\widetilde{T})\\[5pt]
    &=&\widetilde{\varepsilon}(w,\delta_z)E_2(\widetilde{T})\\[5pt]
    &=&\widetilde{\varepsilon}(w,\delta_z)(\widetilde{T}).
   \end{array}
\end{eqnarray*}
This can be linearly extended in $\widehat{D(G)}$. Hence, $\widetilde{T}\in ({\mathcal{F}}\rtimes D(G))^{\widehat{D(G)}}.$

\begin{remark}

$({\mathcal{F}}\rtimes D(G))^{\widehat{D(G)}}$ is the subalgebra of ${\mathcal{F}}\rtimes D(G)$ corresponding to the trivial representation $\widetilde{\varepsilon}$ of $\widehat{D(G)}$. 
\end{remark}

Naturally, we consider the $C^*$-algebra $\langle{\mathcal{F}}\rtimes D(G),e_2\rangle_{C^*}$ constructed from the
$C^*$-basic construction for the inclusion ${\mathcal{F}}\subseteq{\mathcal{F}}\rtimes D(G)$
 in the following, where $e_2$ is the Jones projection of $E_2$.

\begin{theorem}
There exists a $C^*$-isomorphism
of $C^*$-algebras between ${\mathcal{F}}\rtimes D(G)\rtimes \widehat{D(G)}$
and $\langle{\mathcal{F}}\rtimes D(G),e_2\rangle_{C^*}$. That is,
$${\mathcal{F}}\rtimes D(G)\rtimes \widehat{D(G)}\cong \langle{\mathcal{F}}\rtimes D(G),e_2\rangle_{C^*}.$$
\end{theorem}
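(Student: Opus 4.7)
The plan is to mimic the strategy of Theorem 3.1, working first at the local (finite-dimensional) level and then passing to the $C^*$-inductive limit. Concretely, I would define a candidate map
\[
\Psi_{\tfrac{1}{2},2}\colon \langle {\mathcal{F}}(\Lambda_{\tfrac{1}{2},2})\rtimes D(G),e_2\rangle_{C^*}\longrightarrow ({\mathcal{F}}(\Lambda_{\tfrac{1}{2},2})\rtimes D(G))\rtimes \widehat{D(G)}
\]
on generators by $\widetilde{T}\mapsto \widetilde{T}\rtimes I_{\widehat{D(G)}}$ for $\widetilde{T}\in {\mathcal{F}}(\Lambda_{\tfrac{1}{2},2})\rtimes D(G)$ and by sending the Jones projection $e_2$ to $I_{{\mathcal{F}}\rtimes D(G)}\rtimes \tfrac{1}{|G|}\sum_{y\in G}(y,\delta_u)$, i.e.\ to the tensor of the identity with the integral element of $\widehat{D(G)}$. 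The rationale is that, exactly as in Section 3, the integral element plays the algebraic role of the Jones projection for the dual conditional expectation $E_2$.

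Before checking homomorphism properties, I would first establish two preparatory lemmas parallel to Lemmas 3.1 and 3.2: (i) $I_{{\mathcal{F}}\rtimes D(G)}\rtimes \tfrac{1}{|G|}\sum_{y\in G}(y,\delta_u)$ is a self-adjoint idempotent in the iterated crossed product (a direct computation using $(y,\delta_u)^*=(y^{-1},\delta_u)$ and $\widetilde{\bigtriangleup}$ on the integral); and (ii) the covariant identity
\[
\Bigl(I\rtimes \tfrac{1}{|G|}\!\sum_{y}(y,\delta_u)\Bigr)\!\bigl(\widetilde T\rtimes I_{\widehat{D(G)}}\bigr)\!\Bigl(I\rtimes \tfrac{1}{|G|}\!\sum_{y}(y,\delta_u)\Bigr)=\bigl(E_2(\widetilde T)\rtimes I_{\widehat{D(G)}}\bigr)\!\Bigl(I\rtimes \tfrac{1}{|G|}\!\sum_{y}(y,\delta_u)\Bigr),
\]
which uses the explicit form of the $\widehat{D(G)}$-action $\sigma$ together with the expression for $E_2$ from Proposition 4.1. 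Combined with the Jones-relation $e_2 \widetilde T e_2=E_2(\widetilde T)e_2$ this guarantees $\Psi_{\tfrac{1}{2},2}$ is well defined on the spanning set $\{\widetilde T_1 e_2 \widetilde T_2\}$; then one verifies multiplicativity of $\Psi_{\tfrac{1}{2},2}$ by direct comparison of products in ${\mathcal{F}}\rtimes D(G)$ and in the iterated crossed product (each involves a single application of $\widetilde{\bigtriangleup}$), and the $*$-preservation follows from Lemma~3.1-analogue plus the $*$-structure on $\widehat{D(G)}$.

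To obtain bijectivity, I would exhibit a preimage for each basis element $(F\otimes(g,h))\rtimes (y,\delta_x)$ in the iterated crossed product as a product of generators times $\Psi^{-1}_{\tfrac{1}{2},2}(I\rtimes (y,\delta_x))$, much as was done in the proof of Theorem~3.1; this forces $\dim\langle{\mathcal{F}}(\Lambda_{\tfrac{1}{2},2})\rtimes D(G),e_2\rangle_{C^*}\geq |G|^2\dim({\mathcal{F}}(\Lambda_{\tfrac{1}{2},2})\rtimes D(G))$, and an independent argument (writing any triple $\widetilde T_1 e_2 \widetilde T_2$ in a canonical normal form using the relation $e_2\widetilde T e_2=E_2(\widetilde T)e_2$) shows the reverse inequality, so that $\Psi_{\tfrac{1}{2},2}$ is a $*$-isomorphism of finite-dimensional $C^*$-algebras.

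Finally, by induction on the finite intervals $\Lambda_{\tfrac{n}{2},m}$ one builds compatible isomorphisms $\Psi_{\tfrac{n}{2},m}$; because each is an isometry and respects the canonical embeddings, they patch together to a $*$-isomorphism between the dense $*$-subalgebras $\bigcup_{n<m}\langle {\mathcal{F}}(\Lambda_{\tfrac{n}{2},m})\rtimes D(G),e_2\rangle_{C^*}$ and $\bigcup_{n<m}({\mathcal{F}}(\Lambda_{\tfrac{n}{2},m})\rtimes D(G))\rtimes \widehat{D(G)}$, and the conclusion follows from the uniqueness of the $C^*$-inductive limit, just as at the end of Theorem 3.1. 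I expect the main obstacle to be the covariance lemma analogous to Lemma 3.2: the $\widehat{D(G)}$-action $\sigma$ is only defined by its values on the generators $F\otimes(g,h)$ via the Kronecker factor $\delta_{x,h}$, so verifying the covariant identity cleanly (especially its compatibility with the nontrivial coproduct $\widetilde{\bigtriangleup}(y,\delta_x)=\sum_t (y,\delta_{t^{-1}})\otimes(tyt^{-1},\delta_{tx})$ when multiplying compound elements) is where the bookkeeping becomes delicate, and it is the pivot on which well-definedness of $\Psi$ rests.
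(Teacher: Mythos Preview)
Your proposal is correct and follows exactly the approach the paper takes: the paper's own proof simply states that one mimics Theorem~3.1 with the map $\Psi$ defined on generators by $T\rtimes(g,h)\mapsto T\rtimes(g,h)\rtimes I_{\widehat{D(G)}}$ and $e_2\mapsto I_{{\mathcal{F}}\rtimes D(G)}\rtimes\tfrac{1}{|G|}\sum_{y\in G}(y,\delta_u)$, which is precisely what you wrote. Your outline of the analogues of Lemmas~3.1 and~3.2, the bijectivity via dimension count, and the passage to the inductive limit is in fact more detailed than what the paper records.
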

\begin{proof}
The proof is similar to that of Theorem 3.1.
\end{proof}

Moreover, by Takai duality \cite{H.Tak}, the iterated crossed product $C^*$-algebra ${\mathcal{F}}\rtimes D(G)\rtimes \widehat{D(G)}$
is canonically isomorphic to $M_{|G|^2}({\mathcal{F}})$.

\begin{remark}
In the following we will give the concrete construction for $M_{|G|^2}({\mathcal{F}})$.

The local field $M_{|G|^2}({\mathcal{F}}_{\mathrm{loc}})$ of a $M_{|G|^2}(G)$-spin model is a $*$-algebra with a unit $I_{M_{|G|^2}({\mathcal{F}})}$ generated by
 $\Big\{\delta_g(x)\otimes M, \rho_h(l)\otimes N\colon g, h\in G, \ x\in {\Bbb Z},\ l\in {\Bbb Z}+\frac{1}{2},\ M, N\in \mbox{LB} (M_{|G|^2})\Big\}$ satisfying the following relations
\begin{eqnarray*}
    \begin{array}{rcl}
O_M^g(x)O_N^h(x)&=&\delta_{g,h}O_{MN}^g(x),\\[5pt]
 D_M^g(l)D_N^h(l)&=&D_{MN}^{gh}(l),\\[5pt]
 \sum\limits_{g\in G}
O_I^g(x)&=&I_{M_{|G|^2}({\mathcal{F}})}
=D_I^u(l),\\[5pt]
O_M^g(x)O_N^h(x')&=&O_M^h(x')O_N^g(x),\\[5pt]
 D_M^g(l)O_N^h(x)&=& \left\{\begin{array}{cc}
                          O_M^{gh}(x)D_N^g(l), & \mbox{if}\ l<x, \\[5pt]
                          O_M^h(x)D_N^g(l), & \mbox{if}\ l>x,
                        \end{array}\right.
    \end{array}
\end{eqnarray*}
\begin{eqnarray*}
    \begin{array}{rcl}
 D_M^g(l)D_N^h(l')&=& \left\{\begin{array}{cc}
                          D_M^h(l')D_N^{h^{-1}gh}(l), & \mbox{if}\ l>l',\\[5pt]
                          D_M^{ghg^{-1}}(l')D_N^{g}(l), & \mbox{if}\ l<l',
                        \end{array}\right.\\[5pt]
 (O_M^g(x))^*&=&O_{M^*}^g(x),\\[5pt]
  (D_N^h(l))^*&=&D_{N^*}^{h^{-1}}(l),
    \end{array}
\end{eqnarray*}
for $x, x' \in {\Bbb Z},\ l, l'\in {\Bbb Z}+\frac{1}{2} \ {\mbox{and}} \ g, h\in G$, where by $O_M^g(x)$, $D_N^h(l)$ and $\mbox{LB}(M_{|G|^2})$ we denote $\delta_g(x)\otimes M$, $\rho_h(l)\otimes N$ and the linear basis of $M_{|G|^2}(\Bbb{C})$ for convenience, respectively.
\end{remark}

Similar to the case of the field algebra ${\mathcal{F}}$ of $G$-spin models, one can show that $M_{|G|^2}({\mathcal{F}})$ is the $C^*$-algebra.
From now on, we call $M_{|G|^2}({\mathcal{F}})$ the field algebra of  $M_{|G|^2}(G)$-spin models, and we call $O_M^g(x)$ and $D_N^h(l)$ the order and disorder operators, respectively.

Now one can show that the field algebra ${\mathcal{F}}\rtimes D(G)\rtimes \widehat{D(G)}$ is $D(G)$-module algebra. Indeed,
the map $$\tau\colon D(G)\times ({\mathcal{F}}\rtimes D(G)\rtimes\widehat{D(G)}) \rightarrow {\mathcal{F}}\rtimes D(G)\rtimes\widehat{D(G)}$$
given on the generating elements of ${\mathcal{F}}\rtimes D(G)\rtimes\widehat{D(G)}$ as
$$\tau((g,h)\times(\widetilde{F}\otimes(y,\delta_x) ))=\delta_{h^{-1}gh,x^{-1}yx}(\widetilde{F}\otimes(y,\delta_{xh^{-1}}))$$
for any $\widetilde{F}\in {\mathcal{F}}\rtimes D(G)$, can be linearly extended both in $D(G)$ and ${\mathcal{F}}\rtimes D(G)\rtimes \widehat{D(G)}$.

The observable algebra of $M_{|G|^2}(G)$-spin models is defined as  $({\mathcal{F}}\rtimes D(G)\rtimes \widehat{D(G)})^{D(G)}$.
So it is clear that $({\mathcal{F}}\rtimes D(G)\rtimes \widehat{D(G)})^{D(G)}=
E_2({\mathcal{F}}\rtimes D(G)\rtimes \widehat{D(G)})={\mathcal{F}}\rtimes D(G)$.

\begin{remark}
Let ${\mathcal{A}}\subseteq{\mathcal{F}}$ be an inclusion of unital $C^*$-algebras with a conditional expectation $E\colon{\mathcal{F}}\rightarrow{\mathcal{A}}$ of index-finite type \cite{XINQL}. Set
${\mathcal{F}}_{-1}={\mathcal{A}}, \ {\mathcal{F}}_0={\mathcal{F}}$, and
$E_1=E$, and recall the $C^*$-basic construction (the $C^*$-algebra version of the basic construction). We inductively define $e_{k+1}=e_{_{\mathcal{F}_{k-1}}}$
and ${\mathcal{F}}_{k+1}=\langle{\mathcal{F}}_k,e_{k+1}\rangle_{C^*}$, the Jones projection and $C^*$-basic
construction applied to $E_{k+1}\colon {\mathcal{F}}_{k}\rightarrow{\mathcal{F}}_{k-1}$, and take
$E_{k+2}\colon {\mathcal{F}}_{k+1}\rightarrow{\mathcal{F}}_{k}$ to be the
dual conditional expectation $E_{{\mathcal{F}}_{k}}$ of Definition 2.3.3 in \cite{Y.Wat}. Then this gives the inclusion tower
of iterated basic constructions
$${\mathcal{A}}\subseteq{\mathcal{F}}\subseteq{\mathcal{F}}\rtimes D(G)\subseteq {\mathcal{F}}\rtimes D(G)\rtimes \widehat{D(G)}\subseteq
{\mathcal{F}}\rtimes D(G)\rtimes\widehat{D(G)}\rtimes D(G)\subseteq\cdots.$$
 It follows from Proposition 2.10.11 in \cite{Y.Wat} that this tower
does not depend on the choice of $E$.

Notice that ${\mathcal{F}}_2={\mathcal{F}}\rtimes D(G)\rtimes \widehat{D(G)}$ is $C^*$-isomorphic to $M_{|G|^2}({\mathcal{F}})$,
 the field algebra of a $M_{|G|^2}(G)$-spin model, and
${\mathcal{F}}_4={\mathcal{F}}\rtimes D(G)\rtimes \widehat{D(G)}\rtimes D(G)\rtimes \widehat{D(G)}$ is $C^*$-isomorphic to $M_{|G|^4}({\mathcal{F}})$, called the field algebra of a $M_{|G|^4}(G)$-spin model, where the order and disorder operators can be defined similar to those in Remark 4.2.
\end{remark}

 \end{document}